\newcommand\blfootnote[1]{%
  \begingroup
  \renewcommand\thefootnote{}\footnote{#1}%
  \addtocounter{footnote}{-1}%
  \endgroup
}
\theoremstyle{definition}
\newtheorem{ntn}{Notation}[section]
\theoremstyle{plain}
\newtheorem{cor}[ntn]{Corollary}
\newtheorem{lem}[ntn]{Lemma}
\newtheorem{prp}[ntn]{Proposition}
\newtheorem{thm}[ntn]{Theorem}
\theoremstyle{definition}
\newtheorem{rem}[ntn]{Remark}
\newtheorem{exa}[ntn]{Example}
\numberwithin{equation}{section}
\newcommand{\z}{\mathbb{Z}}
\newcommand{\q}{\mathbb{Q}}
\newcommand{\A}{\mathcal{A}}
\newcommand{\E}{\mathcal{E}}
\newcommand{\RR}{\mathcal{R}}
\newcommand{\mt}{\mapsto}
\newcommand{\lan}{\langle}
\newcommand{\ran}{\rangle}
\newcommand{\arr}{\rightarrow}
\newcommand{\larr}{\longrightarrow}
\newcommand{\se}{\subseteq}
\newcommand{\two}{\twoheadrightarrow}
\newcommand{\tail}{\rightarrowtail}
\newcommand{\inc}{{\rm inc}}
\newcommand{\coker}{{\rm coker}}
\newcommand{\im}{{\rm im}}
\newcommand{\Hom}{{\rm Hom}}
\newcommand{\id}{{\rm id}}
\newcommand{\GL}{{\rm GL}}
\newcommand{\tors}{{{\rm Tor}_1^{\z}}}
\newcommand{\exts}{{{\rm Ext}_{\z}^1}}
\newcommand{\OO}{\mathrm{O}}
\newcommand{\EO}{\mathrm{EO}}
\newcommand{\U}{\mathrm{U}}
\newcommand{\EU}{\mathrm{EU}}
\newcommand{\Spp}{\mathrm{Sp}}
\newcommand{\ep}{\epsilon}
\newtheoremstyle{athm}
  {}
  {}
  {\itshape}
  {}
  {\scshape}
  {}
  {.5em}
  {\thmnote{#3}}
\theoremstyle{athm}
\begin{document}
\thispagestyle{empty}

\newpage
\title{Third homology of perfect central extensions}
\author{B.  Mirzaii}
\author{F. Y. Mokari}
\author{D. C. Ordinola}
\begin{abstract}
For a central perfect extension of groups $A \tail G \two Q$, first we study the natural image of 
$H_3(A,\z)$ in $H_3(G, \z)$. As a particular case, we show that if the extension is universal this image
is 2-torsion. Moreover when the plus-construction of the classifying space of $Q$
is an $H$-space, we also study  the kernel of the surjective homomorphism 
$H_3(G,\z) \arr H_3(Q, \z)$. 
\end{abstract}
\maketitle

\section*{Introduction}
\blfootnote{{\sf 2020 Mathematics Subject Classification:} 20J06, 55P20, 19D55}
\blfootnote{{\sf Affiliation:} 
Instituto de Ci\^encias Matem\'aticas e de Computa\c{c}\~ao (ICMC),

Universidade de S\~ao Paulo (USP), S\~ao Carlos,  S\~ao Paulo, Brazil.

E-mails:\ bmirzaii@icmc.usp.br, 

\ \ \ \ \ \ \ \ \ \ \   f.mokari61@gmail.com, 

\ \ \ \ \ \ \ \ \ \ \  d.carbajalordinola@gmail.com}

Homologies and cohomologies  are important invariants that one can assign to a given group. Usually these (co)homology groups 
are too complicated to be computed explicitly. Therefore in many cases  results allowing to 
compare the homologies of different groups become quite important.

In this article, we  study such homomorphism for the third homology groups of a perfect central extension.
A central extension 
\[
A \tail G \two Q
\]
is called perfect if $G$ is a perfect
group, i.e. if $G=[G,G]$. The aim of the current paper is to study the natural
maps $H_3(A,\z) \arr H_3(G,\z)$ and $H_3(G,\z) \arr H_3(Q,\z)$ for such extensions.

The interest to this problem comes from two sources. First from algebraic  and Hermitian $K$-theory,
where various type of universal central extensions appear  \cite[\S 5]{milnor1971}, 
\cite[Chap. 4]{rosenberg1996}, \cite[\S 1.4, \S 5.5]{hahn-omeara1989}.  Second from algebraic topology and homology of groups that often one has 
to deal with different types of spectral sequences that usually are difficult to deal with \cite[Chap. VII]{brown1994}.

In Section \ref{central-ex}, we give a quick overview of the (lower) homologies of a central extension and discuss a result of
Eckmann and Hilton.

In Section \ref{wqf} we study Whitehead's quadratic functor which plays an important role in 
this article.

In Section \ref{stem} we show that if $A$ is a central subgroup of a group $G$ 
such that $A\subseteq G'$, e.g. $G$ a perfect group, then the
image of the natural map
\begin{equation}\label{2-torsion}
H_3(A,\z)\arr H_3(G,\z)/\rho_\ast(A\otimes_\z H_2(G,\z))
\end{equation}
in 2-torsion, where $\rho: A \times G \arr G$ is the usual product map. In particular,
if $A\tail G\two Q$ is a universal central extension, then the
image of $H_3(A,\z)$ in $H_3(G,\z)$ is 2-torsion.

Section \ref{h-group} has $K$-theoretic flavour. We show that
if $A \tail G \two Q$ is a perfect central extension such that $K(Q,1)^+$, the 
plus-construction of the classifying space of $Q$, is an $H$-space, then we have
the exact sequence
\begin{equation}\label{exact-A/2}
A/2 \arr H_3(G,\z) /\rho_\ast(A\otimes_\z H_2(G,\z)) \arr H_3(Q,\z) \arr 0.
\end{equation}
Moreover we prove that with this extra condition, the map (\ref{2-torsion}) is trivial. In particular,
if the extension is universal,  then the image of $H_3(A,\z)$ in $H_3(G,\z)$ is trivial.

In Section \ref{k-th}, we study the third Hermitian $K$-group of a ring. Let $R$ be a ring with involution and a  central element 
$\epsilon $ such that $\epsilon\overline{\epsilon}=1$. Using the exact sequence (\ref{exact-A/2}) we show that we have the exact sequence
\[
{}_\epsilon K_2^h(R)/2\arr {}_\epsilon K_3^h(R)\arr H_3({}_\epsilon \EO(R),\z)\arr 0,
\]
where ${}_\epsilon \EO(R)$ is the elementary subgroup of the stable ortogonal group ${}_\epsilon \OO(R)$ of the pair $(R,\epsilon)$.

Finally in Section \ref{coho} we prove a cohomological version of the exact sequence~(\ref{exact-A/2}).  In fact we show that if 
$A \tail G \two Q$ is a perfect central extension such that $K(Q,1)^+$ is an  $H$-space, then we have the exact sequence
\[
0\arr \exts(A,\z) \arr H^3(Q,\z) \arr H^3(G,\z) \overset{ \rho^\ast}\larr (A\otimes_\z H_2(G,\z))^\ast,
\]
where for an abelian group $M$, $M^\ast$ is the dual group $\Hom_\z(M,\z)$. In particular, if the extension is universal, then we have the
exact sequence
\[
0\arr \exts(A,\z) \arr H^3(Q,\z) \arr H^3(G,\z) \arr 0.
\]

\subsection*{Notations}
We denote the commutator subgroup of a group $G$, by $G'$ or $[G, G]$.
If $A \arr B$ is a homomorphism of abelian groups, by $B/A$ we mean $\coker(A \arr B)$
and by $\im(A)$ we mean the image of $A$ in $B$.

\subsection*{Acknowledgements} F. Y. Mokari  acknowledges that during the work on this paper she was supported by a post-doc fellowship 
of FAPESP (Funda\c{c}\~ao de Amparo \`a Pesquisa do Estado de S\~ao Paulo) with grant number 2016-13937-9 and D. C. Ordinola 
acknowledges  that he was supported partly by CNPq (Conselho Nacional de Desenvolvimento Cient\'ifico e Tecnol\'ogico) and
partly by CAPES (Coordena\c{c}\~ao de Aperfei\c{c}oamento de Pessoal de N\'ivel Superior) PhD fellowships.

\section{The homology of central extensions}\label{central-ex}

Let  $A \tail G \two Q$ be a central extension. 
Standard classifying space theory gives a (homotopy theoretic) fibration \cite[Chap. 6]{davis-kirk2001}
of Eilenberg-MacLane spaces
\[
K(A,1) \arr K(G,1) \arr K(Q,1)
\]
(see  \cite[5.1.28]{rosenberg1996}). From this we obtain the fibration 
\[
K(G,1) \arr K(Q,1) \arr K(A,2)
\]
(see \cite[Lemma 3.4.2]{may-ponto2012}). By studying the Serre spectral sequences  
associated to the morphism of fibrations 
\[
\begin{tikzcd}
\Omega K(A,2)\ar[r] \ar[d] & PK(A,2) \ar[r] \ar[d] & K(A,2) \ar[d]\\
 K(G,1) \ar[r] & K(Q,1) \ar[r] & K(A,2),
\end{tikzcd}
\]
\cite[Chap.~9]{davis-kirk2001},
the first row being the path space fibration over  $K(A,2)$ \cite[Section 6.4]{davis-kirk2001}, Eckmann and Hilton  proved
the following theorem.

\begin{thm}$($\cite[Theorem 1.1]{eckmann-hilton1971}$)$\label{E-H}
For any central extension $A \tail G \two Q$, there is a natural map $\tau: H_4(K(A,2),\z)\arr A \otimes_\z H_1(G,\z)$
and a natural exact sequence
\[
\begin{array}{c}
H_4(Q,\z)\arr\ker(\tau) \arr H_3(G,\z)/\rho_\ast\Big(A\otimes H_2(G,\z)\oplus\tors(A,H_1(G,\z))\Big)\\
\arr H_3(Q,\z)\arr \coker(\tau) \arr H_2(G,\z) \arr H_2(Q,\z) \\
\arr A \arr H_1(G,\z) \arr H_1(Q,\z) \arr 0,
\end{array}
\]
where $\rho: A \times G \arr G$ is the product map $(a,g) \mapsto ag$. 
\end{thm}

\begin{cor}\label{E-H-1}
If $A \tail G \two Q$ is a central perfect extension, then we have the exact sequence
\[
H_4(Q, \z)\!\arr \!H_4(K(A,2),\z) \!\arr \!
H_3(G, \z)/\rho_\ast\Big(A\otimes_\z H_2(G,\z)\Big)\!\arr\! H_3(Q,\z)\!\arr\! 0.
\]
\end{cor}

The group $H_4(K(A,2),\z)$ plays a very important role in this article.
It has interesting properties and has been studied extensively in
\cite[Chap. 2]{whitehead1950} and \cite[Section~13]{eilenberg-maclane1954}.
It is closely connected to Whitehead's quadratic functor, which is the topic of the next section.

\section{Whitehead's quadratic functor}\label{wqf}

A function $\theta: A \arr B$ of (additive) abelian groups is called a quadratic map if
\par (1) for any $a \in A$, $\theta(a)=\theta(-a)$,
\par (2) the function $A \times A \arr B$ with
$(a,b) \mapsto \theta(a+b)-\theta(a)-\theta(b)$ is bilinear.
\medskip

For any abelian group $A$, there is a universal quadratic map 
\[
\gamma: A \arr \Gamma(A)
\]
such that for any quadratic map $\theta: A \arr B$, there is a unique group homomorphism $\Theta: \Gamma(A) \arr B$ such that
\[
\Theta\circ \gamma=\theta.
\] 
It is easy to see that $\Gamma$ is a functor from the category of abelian groups to itself.

The functions 
\[
\phi: A \arr A/2, \ \  \phi(a)=\overline{a}, \ \ {\rm and} \ \ \ \psi: A \arr A \otimes_\z A,   \ \ \psi(a)=a\otimes a,
\]
are quadratic maps. Thus we get the canonical homomorphisms
\[
\Phi: \Gamma(A) \arr A/2,\  \gamma(a) \mapsto \overline{a}
\ \ \ \  \text{and} \ \ \ \  
\Psi:\Gamma(A) \arr A\otimes_\z A,\ \gamma(a) \mapsto a\otimes a.
\] 
Clearly $\Phi$ is surjective. Moreover $\coker(\Psi)=A \wedge A\simeq H_2(A,\z)$ \cite[Theorem~6.4(iii),~Chap.~V]{brown1994}
and hence we have the exact sequence
\begin{equation}\label{exact-Gamma1}
\Gamma(A)\overset{\Psi}{\larr} A\otimes_\z A\arr H_2(A,\z)\arr 0.
\end{equation}

Furthermore we have the bilinear pairing 
\[
\Omega: A \otimes_\z A \arr \Gamma(A), 
\ \ a\otimes b \mt [a,b]:=\gamma(a+b)-\gamma(a)-\gamma(b).
\]
It is easy to see that for any $a,b,c \in A$, 
\par (i) $[a,b]=[b,a]$, 
\par (ii) $[a+b,c]=[a,c]+[b,c]$,
\par (iii) $\Phi([a,b])=0$, 
\par (iv) $\Psi([a,b])=a\otimes b + b \otimes a$.\\
~\\
Using (1) and (ii), for any $a,b,c \in A$, we obtain
\par (a) $\gamma(a)=\gamma(-a)$,
\par (b) $\gamma(a+b+c)-\gamma(a+b)-\gamma(a+c)-\gamma(b+c)
\!+\!\gamma(a)\!+\!\gamma(b)\!+\!\gamma(c)=0$.\\
\smallskip

Using these properties we can construct $\Gamma(A)$. 
Let $\A$ be the free abelian group generated by the symbols $w(a)$, $a \in A$. Set 
\[
\Gamma(A):=\A/\RR,
\]
where $\RR$ is the subgroup generated by the elements 
\par - $w(a)-w(-a)$ and 
\par - $w(a+b+c)-w(a+b)-w(a+c)-w(b+c)+w(a)+w(b)+w(c)$,\\
with $a,b,c \in A$. Now
\[
\gamma:A \arr \Gamma(A)
\] 
is given by
$a \mapsto \overline{w(a)}$.  It is easy to show that 
\[
[a,a]=2\gamma(a).
\]
Thus the composite 
\[
\Gamma(A) \overset{\Psi}{\arr} A \otimes_\z A \overset{\Omega}{\larr} \Gamma(A) 
\]
coincides with multiplication by $2$. This implies that the kernel of $\Psi$ is $2$-torsion.
Moreover, one sees easily that the composite
\[
A \otimes_\z A \overset{\Omega}{\larr} \Gamma(A)  \overset{\Psi}{\arr} A \otimes_\z A
\]
sends $a\otimes b$ to $a\otimes b+ b\otimes a$. It is known that the sequence  
\begin{equation}\label{exact-Gamma2}
A \otimes_\z A \overset{\Omega}{\larr} \Gamma(A) \overset{\Phi}{\arr} A/2 \arr 0
\end{equation}
is exact.

\begin{prp}\label{KA2}
For any abelian group $A$,  $\Gamma(A)\simeq H_4(K(A,2),\z)$.
\end{prp}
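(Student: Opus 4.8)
The plan is to construct an explicit natural transformation $\Gamma(A)\arr H_4(K(A,2),\z)$ and then prove it is an isomorphism by dévissage to cyclic groups. Each $a\in A$ is a homomorphism $\z\arr A$ and so induces a map of Eilenberg--MacLane spaces $\mathbb{CP}^\infty=K(\z,2)\arr K(A,2)$; I would define $\theta\colon A\arr H_4(K(A,2),\z)$ by pushing forward a fixed generator of $H_4(\mathbb{CP}^\infty,\z)\cong\z$. Since that generator is dual to the square of the degree-two generator, the automorphism of $\mathbb{CP}^\infty$ induced by $-1$ acts trivially on $H_4$, giving $\theta(-a)=\theta(a)$; and pulling back along $(\mathbb{CP}^\infty)^k\arr K(A,2)$ the square $(x_1+\cdots+x_k)^2$ expands into diagonal squares plus manifestly bilinear cross terms, so the polarization $(a,b)\mapsto\theta(a+b)-\theta(a)-\theta(b)$ is bilinear and $\theta$ is quadratic. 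The universal property of $\gamma\colon A\arr\Gamma(A)$ then yields a natural homomorphism $\Theta\colon\Gamma(A)\arr H_4(K(A,2),\z)$. For $A=\z$ it sends the generator $\gamma(1)$ of $\Gamma(\z)\cong\z$ to the chosen generator of $H_4(\mathbb{CP}^\infty,\z)\cong\z$, hence is an isomorphism.

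Next I would show both functors commute with filtered colimits: $\Gamma$ is finitary, being defined by a presentation in which each relation involves only finitely many elements, and $K(-,2)$ (modelled functorially) together with singular homology preserves directed colimits. Since every abelian group is the filtered colimit of its finitely generated subgroups and every finitely generated abelian group is a finite direct sum of cyclic groups, it suffices to handle cyclic groups and to check that $\Theta$ respects direct sums. Algebraically there is the natural splitting $\Gamma(A\oplus B)\cong\Gamma(A)\oplus(A\otimes_\z B)\oplus\Gamma(B)$. Topologically $K(A\oplus B,2)\simeq K(A,2)\times K(B,2)$, and because $H_1(K(A,2),\z)=0$ by Hurewicz, the Künneth formula in degree $4$ has vanishing $\tor$-term and vanishing $(1,3)$- and $(3,1)$-tensor terms, leaving $H_4(K(A,2))\oplus\big(H_2(K(A,2))\otimes_\z H_2(K(B,2))\big)\oplus H_4(K(B,2))$. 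Using the Hurewicz identification $H_2(K(A,2),\z)=A$ the middle factor is $A\otimes_\z B$, and the identity $\Theta(\gamma(a+b)-\gamma(a)-\gamma(b))=\theta(a+b)-\theta(a)-\theta(b)$, which equals the Künneth cross product of $a\in H_2(K(A,2),\z)$ and $b\in H_2(K(B,2),\z)$, shows that $\Theta_{A\oplus B}$ matches the two decompositions and is the identity on $A\otimes_\z B$. An induction on the number of cyclic summands then reduces everything to the case of a single cyclic group.

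For $A=\z/n$ the algebraic side is controlled by the exact sequences \eqref{exact-Gamma1} and \eqref{exact-Gamma2}: as $(\z/n)\wedge(\z/n)=0$, the map $\Psi$ surjects onto $(\z/n)\otimes_\z(\z/n)\cong\z/n$, while \eqref{exact-Gamma2} identifies $\coker[\,,\,]$ with $(\z/n)/2$, and combining the two gives $\Gamma(\z/n)\cong\z/n$ for $n$ odd and $\Gamma(\z/n)\cong\z/2n$ for $n$ even. The matching computation of $H_4(K(\z/n,2),\z)$ I would carry out via the Serre spectral sequence of the path--loop fibration $K(\z/n,1)\arr PK(\z/n,2)\arr K(\z/n,2)$ with contractible total space, feeding in the known homology of $B\z/n$ and solving for the base. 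This cyclic computation is the main obstacle: one must follow the differentials closely enough not only to recover the order of $H_4(K(\z/n,2),\z)$ but to verify that the generator is actually hit, so that $\Theta$ is an isomorphism and not merely a map between groups of equal order; here it is cleanest to compare with the classical results of Whitehead \cite{whitehead1950} and Eilenberg--MacLane \cite{eilenberg-maclane1954}. Once the cyclic case is established, the reduction of the previous paragraph finishes the proof.
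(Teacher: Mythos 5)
Your proposal is correct in outline, but it is worth noting that the paper does not really prove this proposition at all: its ``proof'' is a bare citation of \cite[Theorem 21.1]{eilenberg-maclane1954}, so you are supplying an actual argument where the authors supply a reference. Your route --- define $\theta(a)$ by pushing forward the generator of $H_4(\mathbb{CP}^\infty,\z)$ along the map $K(\z,2)\arr K(A,2)$ classifying $a$, verify quadraticity by naturality from the torsion-free universal cases $K(\z^k,2)$ where everything is detected in the polynomial cohomology ring, invoke the universal property of $\gamma$ to get $\Theta\colon\Gamma(A)\arr H_4(K(A,2),\z)$, and then reduce by filtered colimits and the K\"unneth/$\Gamma(A\oplus B)\cong\Gamma(A)\oplus(A\otimes_\z B)\oplus\Gamma(B)$ decompositions to cyclic groups --- is sound; your K\"unneth bookkeeping is right (the $\tor$ terms die because $H_0$ is free and $H_1(K(-,2),\z)=0$, so you do not even need $H_3(K(A,2),\z)=0$), and the colimit step works provided you use a functorial simplicial model of $K(A,2)$ so that homology commutes with directed colimits. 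What your approach buys beyond the citation is naturality of the isomorphism and an explicit identification of $[\ ,\ ]\colon A\otimes_\z A\arr\Gamma(A)$ with the Pontryagin product on $H_4$, facts the paper uses tacitly later (e.g.\ in Theorems \ref{2-torsion-image} and \ref{A/2}, where compatibility of $\Psi$, $[\ ,\ ]$ and the $H$-space product of $K(A,2)$ is essential). The one place where you have not closed the loop is the cyclic case, which you candidly flag: computing $H_4(K(\z/n,2),\z)\cong\z/\gcd(2n,n^2)$ from the path--loop spectral sequence and checking that $\Theta_{\z/n}$ hits a generator. The second point can be handled cleanly inside your own framework rather than by deferring to \cite{whitehead1950}, \cite{eilenberg-maclane1954}: since $\Gamma(\z)\arr\Gamma(\z/n)$ is surjective ($\Gamma(\z/n)$ is generated by $\gamma(1)$) and $\Theta$ is natural, it suffices that $H_4(K(\z,2),\z)\arr H_4(K(\z/n,2),\z)$ is onto, which follows from the Serre spectral sequence of the fibration $K(\z,2)\arr K(\z/n,2)\arr K(\z,3)$, where in total degree $4$ only $E^2_{0,4}$ is nonzero (as $H_2(K(\z,3),\z)=H_4(K(\z,3),\z)=0$), so the edge map from $H_4(K(\z,2),\z)$ is surjective; a map of finite cyclic groups of equal order that is surjective is an isomorphism. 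With that supplement your argument is complete and self-contained up to the standard order computation, whereas the paper's proof is not an argument at all.
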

\begin{proof}
See \cite[Theorem 21.1]{eilenberg-maclane1954}
\end{proof}

Now a topological proof of the exact sequence (\ref{exact-Gamma1}) can be obtained by applying Theorem \ref{E-H} to the
central extension 
\[
A\overset{\simeq}{\tail} A \two \{1\}. 
\]
A topological proof of the exact sequence (\ref{exact-Gamma2}) may be obtained by studying the Serre 
spectral sequence associated to the path space fibration 
\[
\Omega K(A,2)\arr PK(A,2) \arr K(A,2).
\]
We refer the reader to \cite[pages 349--350]{eckmann-hilton1971} for analysis of the Serre spectral sequence associated
to a fibration $F \arr E \arr B$ such that $F$ is $0$-connected, $B$ is 1-connected and $H_3(B,\z)=0$. Observe 
that 
\[
\Omega K(A,n)=K(A,n+1)
\]
and for $n\geq 3$
\[
H_{n+2}(K(A, n),\z) \simeq A/2
\]
\cite[Theorem 3.20, Chap. XII]{whitehead1978}.
We should mention  that $K(A,2)$ is an $H$-space \cite[Theorem 7.11, Chap. V]{whitehead1978} 
and the pairing
\[
A \otimes_\z A\arr H_4(K(A,2),\z)
\]
is induced by the $H$-space structure of $K(A,2)$ (see \cite[page 349]{eckmann-hilton1971}).

\section{Third homology of central subgroups}\label{stem}

Let $A$ be a central subgroup of $G$ such that $A\subseteq G'=[G,G]$. The condition $A\se G'$ is equivalent 
to the triviality of the homomorphism of homology groups $H_1(A,\z)\arr H_1(G,\z)$. Let $n$ be a positive integer.
From the commutative diagram 
\begin{equation}\label{prod}
\begin{tikzcd}
A \times A \ar[d] \ar[r, "\mu"] & A \ar[d]\\
A \times G \ar[r, "\rho"] & G,
\end{tikzcd}
\end{equation}
where $\mu$ and $\rho$ are the usual product maps, we obtain the commutative diagram
\[
\begin{tikzcd}
H_{n-1}(A,\z) \otimes_\z H_1(A,\z) \ar[d, "=0"] \ar[r] & H_n(A,\z) \ar[d]\\
H_{n-1}(A,\z) \otimes_\z H_1(G,\z) \ar[r] & H_n(G,\z).
\end{tikzcd}
\]
The Pontryagin product (see \cite[Chap. V, \S 5]{brown1994}) induces a natural map 
\[
\begin{array}{c}
\bigwedge_\z^nA \arr H_n(A,\z)
\end{array}
\]
which always is injective \cite[Theorem 6.4(i), Chap.~V]{brown1994}. The above diagram shows that the composite 
\[
\begin{array}{c}
\bigwedge_\z^nA \arr H_n(A,\z) \arr H_n(G,\z)
\end{array}
\]
is trivial. Since the natural map 
\[
\begin{array}{c}
\bigwedge_\q^n(A\otimes_\z \q) \arr H_n(A,\q)
\end{array}
\]
is an isomorphism \cite[Theorem 6.4(ii), Chap. V]{brown1994}, the group $H_n(A,\z)/\bigwedge_\z^nA$ is torsion. 
This implies that the image of $H_n(A,\z)$ in $H_n(G,\z)$ is  torsion. Observe that since $H_2(A,\z)\simeq A\wedge A$ 
\cite[Theorem 6.4(iii), Chap.~V]{brown1994}, the  map
\[
H_2(A,\z)\arr H_2(G,\z)
\]
becomes trivial. 

Our first main result concerns the map $H_3(A,\z)\arr H_3(G,\z)$. For the study of this map we need the 
following well-known result.

\begin{prp}\label{H3A}
For any abelian group $A$ we have the exact sequence
\[
\begin{array}{c}
0 \arr \bigwedge_\z^3 A \arr H_3(A,\z) \arr \tors(A,A)^{\Sigma_2^\varepsilon} \arr 0,
\end{array}
\]
where $\Sigma_2^\varepsilon=\{\id,-\sigma\}$.
The homomorphism on the right side of the exact sequence is obtained from the composition
\[
H_3(A,\z) \overset{{\Delta}_\ast}{\larr } H_3(A\times A,\z) \arr \tors(A,A),
\]
where $\Delta$ is the diagonal map $A \arr A\times A$, $a \mt (a,a)$, and the action 
of $\sigma$ on  $ \tors(A,A)$ is induced by the involution $\iota: A \times A \arr A \times A$, $(a, b) \mt (b, a)$.
\end{prp}
\begin{proof}
 See \cite[Lemma~5.5]{suslin1991} or \cite[Section 6]{breen1999}. 
 \end{proof}
 
Here is our first main result.

\begin{thm}\label{2-torsion-image}
Let $A$ be a central subgroup of $G$ such that $A \subseteq G'$. Then the image of the natural map 
\[
H_3(A,\z)\arr H_3(G,\z)/\rho_\ast(A\otimes_\z H_2(G,\z))
\]
is $2$-torsion.
\end{thm}
\begin{proof}
By Proposition \ref{H3A} we have the exact sequence 
\[
\begin{array}{c}
0 \arr \bigwedge_\z^3 A \arr H_3(A,\z) \arr \tors(A,A)^{\Sigma_2^\varepsilon} \arr 0.
\end{array}
\]

From the diagram (\ref{prod}), we obtain the commutative diagram
\[
\begin{tikzcd}
\widetilde{H}_3(A\times A,\z)\ar[r, "\mu_\ast"]\ar[d]& H_3(A,\z)\ar[d]\\
\widetilde{H}_3(A\times G,\z)\ar[r, "\rho_\ast"]& H_3(G,\z),
\end{tikzcd}
\]
where
\[
{\widetilde{H}}_3(A\times A,\z):=\ker(H_3(A\times A,\z)\overset{({p_1}_\ast, {p_2}_\ast)}{-\!\!\!-\!\!\!-\!\!\!\larr}
H_3(A,\z) \oplus H_3(A,\z)),
\]
\[
\tilde{H}_3(A\times G,\z):=\ker(H_3(A\times G,\z)\overset{({p_1}_\ast, {p_2}_\ast)}{-\!\!\!-\!\!\!-\!\!\!\larr}
H_3(A,\z) \oplus H_3(G,\z)).
\]
As we have seen, the condition $A\subseteq G'$ implies that the composite 
\[
\begin{array}{c}
\bigwedge_\z^3 A\arr H_3(A,\z)\arr H_3(G,\z)
\end{array}
\]
is trivial. From this fact together with the K\"unneth formula \cite[Corollary~5.8, Chap.~V]{brown1994}
for ${\widetilde{H}}_3(A\times A,\z)$ we obtain the  commutative diagram
\[
\begin{tikzcd}
\tors(A,A) \ar[ddd, bend right=300] \ar[r, "\bar{\mu}_\ast"] & 
\tors(A,A)^{\Sigma_2^\varepsilon}\\
{\tilde{H}}_3(A\times A) /\bigoplus_{i=1}^2 H_i(A,\z)\otimes_\z H_{3-i}(A,\z)
\ar[ u, "\overset{\alpha}{\simeq}"]\ar[r, "\mu_\ast"] \ar[d, , "\widetilde{\inc}_\ast"]& 
H_3(A,\z)/\bigwedge_\z^3 A \ar[u, "\overset{\beta}{\simeq}"] \ar[d, "\inc_\ast"] \\
{\tilde{H}}_3(A\times G)/\bigoplus_{i=1}^2 H_i(A,\z)\otimes_\z H_{3-i}(G,\z) \ar[d, "\simeq"] \ar[r, "\rho_\ast"]& 
H_3(G,\z)/\rho_\ast(A\otimes_\z H_2(G,\z))\\
\tors(A,H_1(G,\z)). &
\end{tikzcd}
\]
Note that 
\[
\im(H_2(A,\z)\otimes_\z H_1(G,\z)\arr H_3(G,\z)) \se \im(A\otimes_\z H_2(G,\z)\arr H_3(G,\z))
\]
(see \cite[Proposition 4.4, Chap. V]{stammbach1973}).
Since the map 
\[
\tors(A,A)=\tors(A, H_1(A,\z)) \arr \tors(A,H_1(G,\z))
\]
is trivial, we see that $\rho_\ast\circ \widetilde{\inc}_\ast\circ \alpha^{-1}$ is trivial. This shows that 
$\inc_\ast\circ\beta^{-1}\circ \bar{\mu}_\ast$ is trivial. Therefore the image of $H_3(A,\z)$ in 
$H_3(G,\z)$ is equal to the image of
\[
\tors(A,A)^{\Sigma_2^\varepsilon}/\bar{\mu}_\ast\tors(A,A).
\]
By the above arguments, one sees that the homomorphism 
\[
\bar{\mu}_\ast:\tors(A,A)\arr \tors(A,A)^{\Sigma_2^\varepsilon}
\]
is induced by the composition $A\times A \overset{\mu}{\larr} A\overset{\Delta}{\larr} A\times A$.

If we apply Theorem \ref{E-H} to the central extension $A\tail A\two 1$, we obtain the exact sequence
\[
0\arr  \ker(\Psi) \arr H_4(K(A,2),\z) \overset{\Psi}{\arr} 
A\otimes_\z A \arr H_2(A,\z)\arr 0,
\]
where
\[
\ker(\Psi)\simeq H_3(A,\z)/\mu_\ast(A\otimes_\z H_2(A,\z) \oplus \tors(A,A)).
\]
Clearly $\mu_\ast(A\otimes_\z H_2(A,\z))= \bigwedge_\z^3 A\se H_3(A,\z)$.
Therefore
\[
\ker(\Psi)\simeq \tors(A,A)^{\Sigma_2^\varepsilon}/{(\Delta_A\circ\mu)}_\ast(\tors(A,A)).
\]
But in the previous section we have seen that $\ker(\Psi)$ is $2$-torsion. This proves the claim.

\end{proof}

\begin{cor}
 If $A\tail G\two Q$ is a universal central extension, then the image of $H_3(A,\z)$ 
in $H_3(G,\z)$ is $2$-torsion.
\end{cor}
\begin{proof}
Since the extension is universal, the homology groups $H_1(G,\z)$ and $H_2(G,\z)$ are trivial 
\cite[Corollary~4.1.18]{rosenberg1996}. Thus the claim follows from the previous theorem.
\end{proof}

\begin{rem}
(i) If $A$ is a central subgroup of a group $G$, then the same argument as in the proof of Theorem \ref{2-torsion-image}
shows that the image of the natural map
\[
H_3(A,\z)\arr H_3(G,\z)/\rho_\ast\widetilde{H}_3(A\times G,\z)
\]
is $2$-torsion.

\par (ii) In Proposition \ref{trivial} below, we show that if $A\tail G\two Q$ is a universal central
extension such that $K(Q,1)^+$ is an $H$-space, then the natural image of $H_3(A,\z)$ in  $H_3(G,\z)$
is trivial.
\end{rem}

\section{Third homology of central extensions over {\it H}-groups}\label{h-group}

For any sequence of abelian groups $A_n$, $n\geq 2$, Berrick and Miller constructed 
a perfect group $Q$ such that $H_n(Q,\z)\simeq A_n$ \cite[Theorem~1]{berrick-miller1992}.

Let $A$ be an abelian group. By the result of  Berrick and Miller, there is a perfect 
group $Q$ such that $H_2(Q,\z)\simeq A$ and $H_4(Q,\z)=0$. Now if $A \tail G \two Q$ is the 
universal central extension of $Q$ \cite[Theorem 4.1.3]{rosenberg1996}, then by Corollary \ref{E-H-1} we have the exact sequence
\[
0\arr H_4(K(A,2),\z) \arr H_3(G,\z)\arr H_3(Q,\z)\arr 0.
\]
This example shows that in general for an universal central extension $A \tail G \two Q$,
the kernel of $H_3(G,\z)\arr H_3(Q,\z)$ can be very complicated.

A group $Q$ is called {\it quasi-perfect} if its commutator subgroup is perfect, i.e.  $[Q',Q']=Q'$.  

A quasi-perfect  group $Q$ is called an $H$-{\it group} if $K(Q,1)^+$, the plus-construction of $K(Q,1)$ 
with respect to $Q'$ (see \cite[Section 1.1]{loday1976}), is an $H$-space. Note that 
for a group $G$, $K(G,1)$ is an $H$-space if and only if  $G$ is abelian.

\begin{exa}\label{directsum}
(a) A quasi-perfect group $Q$ with an internal ``direct sum'', that is a  homomorphism $Q\oplus Q \arr Q$,  is called a {\it direct sum group} if it 
satisfies in the following two conditions:
\par (i) for any $g_1, \dots, g_k\in Q'$ and any $g\in Q$, there is $h\in Q'$
such that $gg_ig^{-1}=hg_ih^{-1}$ for $1 \leq i \leq k$,
\par (ii) for any $g_1, \dots, g_n\in Q$, there are $c,d \in Q$ such that
$c(g_i\oplus 1)c^{-1}=d(1\oplus g_i)d^{-1}=g_i$.\\
By a theorem of Wagoner direct sum groups are $H$-groups \cite[Proposition 1.2]{wagoner1972}.

For a ring $R$ with unit, the stable general linear group  $\GL(R)$,  the stable orthogonal group $\OO(R)$, 
the stable symplectic groups $\Spp(R)$ (see the next section) are direct sum groups \cite[Example on page 323]{loday1976}.
For more examples and some details see \cite[Section 1.3]{loday1976}.
\par (b) For any abelian group $A$, Berrick has constructed a perfect group
${Q}$ such that $K(Q,1)^+$,  is homotopy equivalence to $K(A,2)$ 
\cite[Corollary 1.4]{berrick1987}. Since $K(A,2)$ is an $H$-space \cite[Theorem 7.11, Chap. V]{whitehead1978}, $Q$ is an $H$-group.
\end{exa}

The next theorem is our second main result.

\begin{thm}\label{A/2}
Let $A \tail G \two Q$ be a perfect central extension.
If $Q$ is an $H$-group, then we have the exact sequence
\[
A/2 \arr H_3(G,\z) /\rho_\ast(A\otimes_\z H_2(G,\z)) \arr H_3(Q,\z) \arr 0.
\]
\end{thm}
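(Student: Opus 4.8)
The plan is to build on the exact sequence~(\ref{exact1}) obtained from the Serre spectral sequence of the fibration $K(G,1)\arr K(Q,1)\arr K(A,2)$, namely
\[
H_4(Q,\z)\arr H_4(K(A,2),\z)\arr H_3(G,\z)/\rho_\ast\big(A\otimes_\z H_2(G,\z)\big)\arr H_3(Q,\z)\arr 0.
\]
By Proposition~\ref{KA2} the second term is $\Gamma(A)$, so the task reduces to showing that the image of the edge map $H_4(Q,\z)\arr \Gamma(A)$ together with the induced map produces a cokernel receiving a surjection from $A/2$; more precisely, I would show that the composite $\Gamma(A)\arr H_3(G,\z)/\rho_\ast(A\otimes_\z H_2(G,\z))$ factors through $\Gamma(A)/\im\big(H_4(Q,\z)\big)$ and that this quotient is a quotient of $A/2$.

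\medskip

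First I would exploit the hypothesis that $Q$ is an $H$-group. Since $K(Q,1)^+$ is an $H$-space, its loop-space and multiplicative structure endow $H_\ast(Q,\z)$ (equivalently $H_\ast(K(Q,1)^+,\z)$, as the plus-construction preserves homology) with the structure of a Hopf algebra, and in particular the $H$-space product $K(A,2)\times K(A,2)\arr K(A,2)$ is compatible with the map $K(Q,1)^+\arr K(A,2)$ classifying the extension. The key consequence I want to extract is that the edge homomorphism $H_4(Q,\z)\arr \Gamma(A)=H_4(K(A,2),\z)$ hits the entire image of the pairing $A\otimes_\z A\xrightarrow{[\ ,\ ]}\Gamma(A)$. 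Indeed, the $H$-space multiplication on $K(A,2)$ induces the map $A\otimes_\z A\arr H_4(K(A,2),\z)$ (this is the bilinear pairing, as recorded at the end of Section~\ref{wqf}), and the $H$-space structure on $K(Q,1)^+$ makes the classifying map multiplicative, so the products of classes in $H_2(Q,\z)\simeq$ (a quotient related to $A$) map onto $\im[\ ,\ ]$ inside $\Gamma(A)$.

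\medskip

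Granting that $\im\big(H_4(Q,\z)\arr \Gamma(A)\big)\supseteq \im\big([\ ,\ ]\big)$, exactness of~(\ref{exact1}) gives a surjection
\[
\Gamma(A)/\im\big([\ ,\ ]\big)\two H_3(G,\z)/\rho_\ast\big(A\otimes_\z H_2(G,\z)\big)\big/\,\im H_3(Q\text{-correction}),
\]
but by the exact sequence~(\ref{exact-Gamma2}),
\[
A\otimes_\z A\xrightarrow{[\ ,\ ]}\Gamma(A)\xrightarrow{\Phi} A/2\arr 0
\]
identifies $\Gamma(A)/\im[\ ,\ ]$ with $A/2$ via $\Phi$. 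Feeding this through the cokernel appearing in~(\ref{exact1}) yields exactly the claimed three-term exact sequence
\[
A/2\arr H_3(G,\z)/\rho_\ast\big(A\otimes_\z H_2(G,\z)\big)\arr H_3(Q,\z)\arr 0,
\]
where the right exactness and surjectivity are already built into~(\ref{exact1}) and the left map is the composite $A/2\simeq\Gamma(A)/\im[\ ,\ ]\arr H_3(G,\z)/\rho_\ast(A\otimes_\z H_2(G,\z))$.

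\medskip

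The main obstacle will be the middle step: rigorously establishing that the image of $H_4(Q,\z)$ in $\Gamma(A)$ contains $\im\big([\ ,\ ]\big)$. This is precisely where the $H$-group hypothesis must do its work, and it requires carefully tracking the naturality of the Serre spectral sequence with respect to the $H$-space product, identifying the transgression or edge map on $H_4(K(A,2),\z)=\Gamma(A)$ with the pairing $[\ ,\ ]$, and verifying that the multiplicative structure of $H_\ast(K(Q,1)^+,\z)$ forces the relevant products to be in the image. I expect the cleanest route is to compare the two fibrations via the $H$-space diagonal/product, using that $K(A,2)$ is itself an $H$-space (as noted at the end of Section~\ref{wqf}) so that the map on base spaces respects multiplication; the bilinear part $\im[\ ,\ ]$ is generated by decomposable classes $a\otimes b+b\otimes a$, and these arise from products of two-dimensional homology classes that lift to $H_4(Q,\z)$ precisely because of the $H$-space structure.
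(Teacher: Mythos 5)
Your proposal follows essentially the same route as the paper's proof: the exact sequence coming from the Serre spectral sequence of the fibration $K(G,1)^+\arr K(Q,1)^+\arr K(A,2)$, the identification $H_4(K(A,2),\z)\simeq\Gamma(A)$, and the sequence (\ref{exact-Gamma2}) identifying $\Gamma(A)/\im[\ ,\ ]$ with $A/2$ via $\Phi$. The step you flag as the main obstacle is settled in the paper exactly along the lines you sketch, by citing Zabrodsky's result that the classifying map $K(Q,1)^+\arr K(A,2)$ is an $H$-map and then using that perfectness of $G$ makes $H_2(Q,\z)\arr A$ surjective, so the induced diagram of products shows $\im\big(H_4(Q,\z)\arr\Gamma(A)\big)\supseteq\im[\ ,\ ]$.
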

\begin{proof}
From the central extension and the fact that $Q$ is perfect we have 
the fibration 
\[
K(A, 1) \arr K(G,1)^+ \arr K(Q,1)^+
\]
\cite[Proposition 1]{wojtkowiak1985}, \cite[Theorem 6.4]{berrick1982}. From this 
we obtain the fibration 
\begin{equation}\label{fib+}
K(G,1)^+ \arr K(Q,1)^+ \arr K(A,2)
\end{equation}
\cite[Lemma 3.4.2]{may-ponto2012}. It is known that $K(A,2)$ is an $H$-space.
Moreover  the map $K(Q,1)^+ \arr K(A,2)$ is an
$H$-map \cite[Proposirion 2.3.1]{zabrodsky1976}. Since the plus-construction does not change the homology \cite[Theorem 1.1.1]{loday1976},
from the Serre spectral sequence of the fibration (\ref{fib+}) we obtain (see \cite[pages~347--349]{eckmann-hilton1971}) the exact sequence 
\[
H_4(Q,\z)\! \arr\! H_4(K(A,2),\z)\! \arr \!
H_3(G,\z)/\rho_\ast(A\otimes_\z H_2(G,\z))\! \arr\! H_3(Q,\z)\!\arr\! 0.
\]
From the commutative diagram, up to homotopy, of $H$-spaces and $H$-maps 
\[
\begin{tikzcd}
K(Q,1)^+ \times K(Q,1)^+ \ar[r] \ar[d] & K(Q,1)^+ \ar[d]\\
K(A,2) \times K(A,2) \ar[r] & K(A,2)
\end{tikzcd}
\]
we obtain the commutative diagram
\[
\begin{tikzcd}
H_2(Q,\z) \otimes_\z H_2(Q,\z) \ar[r] \ar[d] & H_4(Q,\z) \ar[d]\\
A \otimes_\z A \ar[r] & H_4(K(A,2),\z).
\end{tikzcd}
\]
Since $G$ is perfect, $H_2(Q,\z) \arr A$ is surjective.  This gives us the surjective map 
\[
H_4(K(A,2),\z)/\im (A\otimes_\z A)\two H_4(K(A,2),\z)/\im(H_4(Q,\z)).
\]
From this, Proposition \ref{KA2} and the exact sequence (\ref{exact-Gamma2}) we obtain the desired exact sequence.
\end{proof}

\begin{cor}\label{A/22}
Let $A \tail G \two Q$ be an universal central extension. If $Q$ is an $H$-group, then we have the exact sequence
\[
A/2 \arr H_3(G,\z) \arr H_3(Q,\z) \arr 0.
\]
\end{cor}

For a perfect central extension $A \tail G \two Q$, Theorem \ref{2-torsion-image}  implies that
the image of $H_3(A,\z)$ in $H_3(G,\z)/\rho_\ast(A\otimes_\z H_2(G,\z))$ is 2-torsion. In the next 
proposition we show that this image is trivial provided that $Q$ is an $H$-group. We need the following well-known fact.

\begin{lem}\label{ab}
For any abelian group $A$, we have the short exact sequence
\[
\begin{array}{c}
0\arr A/2 \overset{\bar{\psi}}{\larr} (A\otimes_\z A)_\sigma \overset{\beta}{\larr} \bigwedge_\z^2 A\arr 0,
\end{array}
\]
where $(A\otimes_\z A)_{\sigma}:=(A\otimes_\z A)/ \langle a\otimes b + b\otimes a | a,b \in A\rangle$,
$\bar{\psi}(\overline{a})=\overline{a\otimes a}$ and $\beta(\overline{a\otimes b})=a\wedge b$.
\end{lem}
\begin{proof}
This exact sequence is well-known and has appeared in \cite[page 70]{mikhailov2015} without a proof. But its proof
is rather classic. 

First note that $A\simeq\underset{i\in I}{\varinjlim} \ A_i$, where $\{A_i: i\in I\}$ is the direct system of all finitely generated subgroups of $A$. 
Now it is easy to see that 
\[
\begin{array}{c}
A\otimes_\z A\simeq \underset{i\in I}{\varinjlim} \ (A_i\otimes_\z A_i),  \ \ A/2\simeq\underset{i\in I}{\varinjlim}\  (A_i/2),  \  \ 
\bigwedge_\z^2A \simeq \underset{i\in I}{\varinjlim} \ \bigwedge_\z^2A_i
\end{array}
\]
(see \cite[6.3, Chap. V]{brown1994} for the third isomorphism). Furthermore, observe that 
\[
(A\otimes_\z A)_\sigma=H_0(\Sigma_2, A\otimes_\z A)={\rm Tor}_0^{\z[\Sigma_2]}(\z, A\otimes_\z A),
\] 
where $\Sigma_2=\{1, \sigma\}$ and
the action of $\sigma$ on $A\otimes_\z A$ is given by  $\sigma(a\otimes b)=b\otimes a$.
Since the direct limite (with direct system index) is an exact functor and commutes with the Tor-functor,
we may assume that $A$ is finitely generated. If $A=A_1\oplus A_2$, it is easy to see that the claim holds
for $A$ if and only if it holds for $A_1$ and $A_2$. Thus we can reduce the problem to cyclic groups.
Since for any cyclic group $A$,  $\bigwedge_\z^2A=0$ and $(A\otimes_\z A)_\sigma\simeq A/2$, the claim follows easily.
\end{proof}

\begin{prp}\label{trivial}
Let $A \tail G \two Q$ be a perfect central extension.
If $Q$ is an $H$-group, then the natural map
\[
H_3(A,\z) \arr H_3(G,\z) /\rho_\ast(A\otimes_\z H_2(G,\z)) 
\]
is trivial. In particular, if the extension is universal, then the natural map
$H_3(A,\z)\arr H_3(G,\z)$ is trivial.
\end{prp}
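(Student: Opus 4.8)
The plan is to combine the two main results already established: Theorem~\ref{2-torsion-image}, which tells us the image of $H_3(A,\z)$ in $H_3(G,\z)/\rho_\ast(A\otimes_\z H_2(G,\z))$ is $2$-torsion, together with the $H$-space hypothesis on $Q$, which via the fibration $K(G,1)^+ \arr K(Q,1)^+ \arr K(A,2)$ should force the relevant $2$-torsion to vanish. So the strategy is not to attack the map $H_3(A,\z)\arr H_3(G,\z)/\rho_\ast(\cdots)$ directly, but to identify its image with a subquotient of $\Gamma(A)=H_4(K(A,2),\z)$ and show that subquotient is killed off.

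First I would recall from the proof of Theorem~\ref{2-torsion-image} that the image in question is exactly the image of $\ker(\Psi)\se \Gamma(A)$, where $\ker(\Psi)$ was identified with $\tors(A,A)^{\Sigma_2}/(\Delta_A\circ\mu)_\ast(\tors(A,A))$ and is killed by $2$. The point is that this $\ker(\Psi)$ sits inside $H_4(K(A,2),\z)$, and the image of $H_3(A,\z)$ in $H_3(G,\z)/\rho_\ast(A\otimes_\z H_2(G,\z))$ factors through the edge map $H_4(K(A,2),\z)\arr H_3(G,\z)/\rho_\ast(A\otimes_\z H_2(G,\z))$ of the Serre spectral sequence of the fibration. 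Next I would invoke the computation inside the proof of Theorem~\ref{A/2}: when $Q$ is an $H$-group, the exact sequence coming from the fibration, combined with the commutative diagram of $H$-spaces, shows that the cokernel of $H_4(Q,\z)\arr H_4(K(A,2),\z)$ is a quotient of $H_4(K(A,2),\z)/\im(A\otimes_\z A)$, which by the exact sequence (\ref{exact-Gamma2}), namely $A\otimes_\z A \overset{[\ ,\ ]}{\larr}\Gamma(A)\overset{\Phi}{\arr}A/2\arr 0$, is a quotient of $A/2$.

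The key step is then to chase where $\ker(\Psi)$ lands under $\Phi$. Since $\ker(\Psi)$ is $2$-torsion and $\Phi$ has image $A/2$, I would show that the composite $\ker(\Psi)\harr \Gamma(A)\arr \Gamma(A)/\im(A\otimes_\z A)$ is trivial: indeed $\im([\ ,\ ])=\ker(\Phi)$ by (\ref{exact-Gamma2}), so $\Gamma(A)/\im(A\otimes_\z A)\simeq A/2$, and I must verify that $\ker(\Psi)\se\im([\ ,\ ])$. This is plausible because $[\ ,\ ]$ surjects onto everything in the kernel of $\Phi$, and the elements of $\ker(\Psi)$ produced in Theorem~\ref{2-torsion-image} arose precisely from the bracket construction $\tors(A,A)\arr\Gamma(A)$. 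Granting this containment, the image of $\ker(\Psi)$ in $H_3(G,\z)/\rho_\ast(A\otimes_\z H_2(G,\z))$ dies, since it is already annihilated in the passage through $H_4(K(A,2),\z)/\im(A\otimes_\z A)$; hence the map $H_3(A,\z)\arr H_3(G,\z)/\rho_\ast(A\otimes_\z H_2(G,\z))$ is trivial.

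The main obstacle I anticipate is the bookkeeping needed to verify precisely that $\ker(\Psi)\se\im([\ ,\ ])$ and that the edge-map factorization is the one I claim. This requires carefully matching the identification of $\ker(\Psi)$ from the first spectral sequence argument in Theorem~\ref{2-torsion-image} with the $H$-space multiplication map $A\otimes_\z A\arr H_4(K(A,2),\z)$ appearing in Theorem~\ref{A/2}; the last sentence of Section~\ref{wqf} asserts that this multiplication map is induced by the $H$-space structure, and I would need to confirm it coincides up to the bracket with the $\mu_\ast$-type maps used earlier, so that the two torsion subquotients genuinely line up. Once that compatibility is pinned down, the vanishing is immediate, and the universal case follows since there $\rho_\ast(A\otimes_\z H_2(G,\z))=0$ because $H_2(G,\z)=0$.
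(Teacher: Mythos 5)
Your reduction follows the same skeleton as the paper's own proof: factor the image of $H_3(A,\z)$ through $\ker(\Psi)\se\Gamma(A)=H_4(K(A,2),\z)$, observe that under the $H$-group hypothesis the edge map $\Gamma(A)\arr H_3(G,\z)/\rho_\ast(A\otimes_\z H_2(G,\z))$ factors through $\Gamma(A)/\im(A\otimes_\z A)\simeq A/2$ (by Theorem \ref{A/2} together with (\ref{exact-Gamma2})), and thereby reduce everything to the containment $\ker(\Psi)\se\im([\ ,\ ])=\ker(\Phi)$. But that containment is precisely the crux of the whole proposition, and you do not prove it: your ``granting this containment'' carries all the weight. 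The plausibility argument you offer --- that the elements of $\ker(\Psi)$ in Theorem \ref{2-torsion-image} ``arose from the bracket construction'' --- does not withstand scrutiny. Theorem \ref{2-torsion-image} identifies $\ker(\Psi)$ with $\tors(A,A)^{\Sigma_2}/(\Delta_A\circ\mu)_\ast(\tors(A,A))$ as a subquotient of $H_3(A,\z)$ via a spectral-sequence filtration; this says nothing about how $\ker(\Psi)$ sits inside $\Gamma(A)$ relative to the image of the pairing $[\ ,\ ]:A\otimes_\z A\arr\Gamma(A)$ (there is no map $\tors(A,A)\arr\Gamma(A)$ of bracket type anywhere in the setup). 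Nor does $2$-torsionness help: the target $A/2$ of $\Phi$ is itself $2$-torsion, so no torsion consideration can force $\ker(\Psi)$ into $\ker(\Phi)$.

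The paper closes exactly this gap with a short but essential diagram chase that your proposal is missing. Writing $\Theta=[\ ,\ ]$, one compares the exact sequence $0\arr\ker(\Theta)\arr A\otimes_\z A\arr\im(\Theta)\arr 0$ with $0\arr\ker(\Psi)\arr\Gamma(A)\overset{\Psi}{\arr}A\otimes_\z A$ via the vertical maps $\Theta$ and $\Psi$ (using that $\Psi\circ\Theta$ sends $a\otimes b$ to $a\otimes b+b\otimes a$), and, invoking (\ref{exact-Gamma2}), extracts an exact sequence $\ker(\Psi)\arr A/2\overset{\delta}{\arr}(A\otimes_\z A)_\sigma\arr H_2(A,\z)\arr 0$ with $\delta(\overline{a})=\overline{a\otimes a}$. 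Since the sequence $0\arr A/2\arr(A\otimes_\z A)_\sigma\arr H_2(A,\z)\arr 0$ is always exact, $\delta$ is injective, and hence the map $\ker(\Psi)\arr A/2$ is trivial --- which is the containment $\ker(\Psi)\se\ker(\Phi)$ you needed. Without this step (or some equivalent verification), your argument is a correct and well-organized reduction, but not a proof; the ``bookkeeping'' you defer at the end is in fact the mathematical heart of the proposition.
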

\begin{proof}
If we apply Theorem \ref{E-H} to the morphism of extensions
\[
\begin{tikzcd}
A \ar[r, tail, two heads]\ar[d]& A \ar[r]\ar[d] & \{1\} \ar[d]\\
A \ar[r, tail] &G \ar[r, two heads] & Q,
\end{tikzcd}
\]
we obtain the commutative diagram with exact rows
\[
\begin{tikzcd}
\ker(\Psi) \ar[r, "\simeq"] \ar[d] & \overline{H_3(A,\z)} \ar[d]                       &                    \\
H_4(K(A,2),\z) \ar[r]                           & H_3(G,\z) /\rho_\ast(A\otimes_\z H_2(G,\z))\!  \arr&\!\!\!\!\!\!\!\!\!\!\!\!\!\!\!\!\!\!\!H_3(Q,\z)\! \arr 0,
\end{tikzcd}
\]
 where $\overline{H_3(A,\z)}$ is a quotient of $H_3(A,\z)$  and the map 
 \[
 \Psi:\Gamma(A)=H_4(K(A,2),\z)\larr A\otimes_\z A
 \]
 is discussed in Section \ref{wqf}. Since $\Gamma(A)/\Omega(A)\simeq A/2$ (see the exact sequence (\ref{exact-Gamma2})), from  
 Theorem \ref{A/2} and the above diagram we obtain the commutative diagram
\begin{equation}\label{dd}
\begin{tikzcd}
\ker(\Psi) \ar[r, "\simeq"] \ar[d] & \overline{H_3(A,\z)} \ar[d] \\
A/2 \ar[r]                           & H_3(G,\z) /\rho_\ast(A\otimes_\z H_2(G,\z)).
\end{tikzcd}
\end{equation}
We have seen that the composite 
 \[
 A\otimes_\z A \overset{\Omega}{\larr} \Gamma(A) \overset{\Psi}{\larr} A\otimes_\z A,
 \]
 takes $a\otimes b$ to $a\otimes b+b\otimes a$. Thus from the commutative diagram
\[
\begin{tikzcd}
0\ar[r] &\ker(\Omega) \ar[r]\ar[d] & A\otimes_\z A \ar[r, "\Omega"]\ar[d, "\Omega"] & \im(\Omega) \ar[d, "\Psi"] \lar[r]&0\\
0\ar[r] &\ker(\Psi) \ar[r]& \Gamma(A)\ar[r, "\Psi"] &A\otimes_\z A & 
\end{tikzcd}
\]
and the exact sequence (\ref{exact-Gamma2}) we obtain the exact sequence
\[
\ker(\Psi) \arr A/2 \overset{\bar{\psi}}\arr (A\otimes_\z A)_{\sigma} \arr H_2(A,\z) \arr 0.
\]
By Lemma \ref{ab} the sequence 
\[
0\arr A/2 \overset{\bar{\psi}}\arr (A\otimes_\z A)_{\sigma} \arr H_2(A,\z) \arr 0
\]
is exact. Therefore the map $\ker(\Psi) \arr A/2$ is trivial. Now it follows from the
diagram (\ref{dd}) that the map $\overline{H_3(A,\z)}\arr H_3(G,\z) /\rho_\ast(A\otimes_\z H_2(G,\z))$ is trivial.
\end{proof}

\begin{exa}
Let $A \tail G \two Q$ be a perfect central extension and let $Q$ be an $H$-group. Here we would like to calculate the homomorphism 
\[
A/2 \arr H_3(G,\z)/\rho_\ast(A\otimes_\z H_2(G,\z))
\]
from Theorem \ref{A/2}.

The extension $A\tail G \two Q$ is an epimorphic image of the universal extension of $Q$, say $A_1\tail G_1 \two Q$, which is unique up 
to isomorphism. By Theorem \ref{A/2}, this gives us the commutative diagram
\[
\begin{tikzcd}
A_1/2 \ar[r] \ar[d, two heads] & H_3(G_1,\z)\ar[d]\\
A/2 \ar[r] & H_3(G,\z) /\rho_\ast(A\otimes_\z H_2(G,\z)).
\end{tikzcd}
\]
Thus we may assume that our extension is universal.

Thus let $A \tail G \two Q$ be an universal central extension such that $Q$ is an $H$-group.
From Corollary \ref{E-H-1} we have the exact sequence 
\[
H_4(Q,\z)\arr \Gamma(A)\arr H_3(G,\z) \arr H_3(Q,\z) \arr 0.
\]
By studying the Lyndon/Hochschild-Serre spectral sequences of the extension \cite[\S, Chap. VII]{brown1994}, we obtain the 
exact sequence 
\[
H_4(Q,\z)\arr B \arr H_3(G,\z)/ H_3(A,\z)\arr H_3(Q,\z) \arr 0,
\]
where 
\[
B=\ker(A\otimes_\z A \arr H_2(A,\z))=\lan a\otimes a: a\in A\ran. 
\]
By Proposition~\ref{trivial}, $H_3(G,\z)=H_3(G,\z)/H_3(A,\z)$. Now it
is easy to see that the following  diagram is commutative:
\[
\begin{tikzcd}
H_4(Q,\z)\ar[r]\ar[d, "="]&\Gamma(A)\ar[r]\ar[d, two heads, "\Psi"]&H_3(G,\z)\ar[d, "="]\ar[r]&H_3(Q,\z)\ar[d, "="]&\!\!\!\!\!\!\!\!\!\!\!\!\!\!\!\!\!\!\!\!\arr 0\\
H_4(Q,\z)\ar[r]& B \ar[r]& H_3(G,\z)\ar[r]& H_3(Q,\z)&\! \!\!\!\!\!\!\!\!\!\!\!\!\!\!\!\!\!\!\!\arr 0.
\end{tikzcd}
\]

Since the map $A/2 \arr H_3(G,\z)$ factors through $\Gamma(A)/H_4(Q,\z)$, it is also factors through
the group $B/H_4(Q,\z)$. In fact it factors throughout $B/\lan a\otimes b+b \otimes a|a,b\in A\ran$. 
Thus it is enough to calculate the map
\[
B/\lan a\otimes b+b \otimes a| a,b\in A\ran \overset{\eta}{ \larr} H_3(G,\z) .
\]

Let $Q=F/S$ be a free presentations of $Q$.
By a theorem of Hopf $H_2(Q,\z)\simeq (S \cap [F,F])/[S,F]$
\cite[Theorem 5.3, Chap. II]{brown1994}. This isomorphism can be given by  
the following explicit formula
\begin{align*}
\Lambda:(S \cap [F,F])/[S,F] &
\overset{\simeq}{\larr} H_2(Q,\z)=H_2(B_\bullet(Q)_Q),\\
\bigg(\prod_{i=1}^g [a_i,b_i]\bigg)[S,F] &\!\mapsto\!\!\!\!\!
\begin{array}{c}
{\sum_{i=1}^g 
\Big([\bar{s}_{i-1}|\bar{a}_i] \!+\![\bar{s}_{i-1}\bar{a}_i|\bar{b}_i]\!-\!
[\bar{s}_{i}\bar{b}_i|\bar{a}_i]-[\bar{s}_{i}|\bar{b}_i] \Big)},
\end{array}
\end{align*}
where ${s}_i=[{a}_1, {b}_1]\cdots [{a}_i, {b}_i]$
and for $x\in F$ we set $\bar{x}=xS\in F/S=Q$ 
\cite[Exercise~4, \S5, Chap. II]{brown1994}. Note that $\bar{s}_g=1$.
Here $B_\bullet(Q) \arr \z$ is the bar resolution of $Q$.

Let $G=F/R$, $Q=F/S$ and $A=S/F$ 
be free presentations of $G$, $Q$ and $A$, respectively.
Since $A$ is central we have $[S,F]\subseteq R$ and thus the following diagram
\[
\begin{array}{ccc}
H_2(Q,\z) \!\!\!& \overset{\simeq}
{-\!\!\!-\!\!\!-\!\!\!-\!\!\!-\!\!\!-\!\!\!-\!\!\!-\!\!\!-\!\!\!-\!\!\!-\!\!\!
-\!\!\!\larr}& \!\!\!\!\! A=S/R\\
{}_{\Lambda}\!\!\!\nwarrow\!\!\!\!\!\!\!\!\!\!\!\!\!\!\!\!\! & &\!\!\!
\hspace{-2.2cm} \nearrow\\ 
& (S\cap [F,F])/[S,F]. &
\end{array}
\]
commutes, where $(S \cap [F,F])/[S,F] \arr S/R=A$ is given by $s[S,F]\mapsto sR$. 
For any $a\in F$, we denote $aR\in G=F/R$ by $\hat{a}$ and for any 
$s\in S\cap [F,F]$, we denote $s[S,F]$ by $\tilde{s}$.

 The Lyndon-Hochschild-Serre spectral sequence 
\[
\E^2_{p,q}=H_p(Q ,H_q(A,\z))\Rightarrow H_{p+q}(G,\z)
\]
gives us a filtration of $H_3(G,\z)$
\[
0=F_{-1}H_3 \se  F_{0}H_3 \se F_{1}H_3 \se F_{2}H_3 \se F_{3}H_3=H_3(G,\z),
\]
such that $\E_{i,3-i}^\infty=F_{i}H_3/F_{i-1}H_3$. Now by an easy analysis of the above spectral 
sequence one sees that $F_{0}H_3 =F_{1}H_3=0$ and  the map $\eta$ is induced by the composite
\begin{equation}\label{maps}
B \arr E_{2,1}^3\simeq E_{2,1}^\infty \simeq F_2H_3
\subseteq H_3(G,\z).
\end{equation}
If $s_g=\prod_{i=1}^g[a_i,b_i]\in S\cap [F,F]$, then we need to compute 
\[
\eta(\Lambda(\tilde{s}_g)\otimes \hat{s}_g)\in H_3(G,\z)
\]
under the composition (\ref{maps}). By direct calculation, which we delete the details here,
this element maps to the following element of $H_3(G,\z)$:
\begin{gather*}
\hspace{-8.7cm}
\Lambda(s_g):=[\hat{s}_g|\hat{s}_g^{-1}|\hat{s}_g]+\\
\ \ \begin{array}{ll} 
\sum_{i=1}^g \Big(&
\!\!\!\!\![\hat{a}_i^{-1}|\hat{s}_{i-1}^{-1}|\hat{s}_g]
\!-\![\hat{a}_i^{-1}|\hat{s}_g|\hat{s}_{i-1}^{-1}]
\!-\![\hat{a}_i^{-1}|b_i^{-1}\hat{s}_{i}^{-1}|\hat{s}_g]
\!+\![\hat{a}_i^{-1}|\hat{s}_g|\hat{b}_i^{-1}\hat{s}_{i}^{-1}]+ \\
&
\!\!\!\!\![\hat{b}_i^{-1}|\hat{a}_i^{-1}\hat{s}_{i-1}^{-1}|\hat{s}_g]
\!-\![\hat{b}_i^{-1}|\hat{s}_g|\hat{a}_i^{-1}\hat{s}_{i-1}^{-1}]
\!-\![\hat{b}_i^{-1}|\hat{s}_{i}^{-1}|\hat{s}_g]
\!+\![\hat{b}_i^{-1}|\hat{s}_g|\hat{s}_{i}^{-1}]+\\
&
\!\!\!\!\![\hat{s}_g|\hat{a}_i^{-1}|\hat{s}_{i-1}^{-1}]
\!-\![\hat{s}_g|\hat{a}_i^{-1}|\hat{b}_i^{-1}\hat{s}_{i}^{-1}]
\!+\![\hat{s}_g|\hat{b}_i^{-1}|\hat{a}_i^{-1}\hat{s}_{i-1}^{-1}]
\!-\![\hat{s}_g|\hat{b}_i^{-1}|\hat{s}_{i}^{-1}]\Big).
\end{array}
\end{gather*}
\end{exa}

\section{The Hermitian $K$-theory}\label{k-th}

Let $Q$ be a perfect group. Then $K(Q,1)^+$ is a 1-connected CW-complex \cite[Theorem 5.2.2]{rosenberg1996} 
and by the theorem of Hurewicz
\[
\pi_2(K(Q,1)^+)\arr H_2(Q,\z)
\] 
is an isomorphism and
\[
\pi_3(K(Q,1)^+) \arr H_3(Q,\z)
\]
is surjective.

Let $A \tail G \two Q$ be a central extension with $Q$ perfect.
Then 
\[
K(A,1) \arr K(G,1)^+ \arr K(Q,1)^+
\]
is a fibration (see \cite[Corollary 8.4]{berrick1982} or \cite[Proposition 1]{wojtkowiak1985}), where the plus-construction
$K(G,1)^+$ is taken with respect to the maximal perfect subgroup of $G$. From this we obtain the exact sequence
\[
\cdots \arr \pi_3(K(A,1)) \arr \pi_3(K(G,1)^+) \arr \pi_3(K(Q,1)^+) \arr \pi_2(K(A,1)) \arr \cdots.
\]
Since $\pi_n(K(A,1))=0$ for $n\neq 1$, this implies that for $n\geq 3$,
\[
\pi_n(K(G,1)^+)\simeq \pi_n(K(Q,1)^+).
\]

\begin{lem}\label{pi3-h3}
Let $A\tail G \two Q$ be the universal central extension of the perfect group $Q$. Then $\pi_3(K(Q,1)^+)\simeq H_3(G,\z)$ and 
the Hurewicz map $\pi_3(K(Q,1)^+) \arr H_3(Q,\z)$ coincides with the natural map $H_3(G,\z) \arr H_3(Q,\z)$. 
\end{lem}
\begin{proof}
We proved in above that $\pi_3(K(G,1)^+) \simeq \pi_3(K(Q,1)^+)$. Since the extension
is universal,  $H_1(G,\z)=H_2(G,\z)=0$. Now Hurewicz's theorem implies that
\[
\pi_1(K(G,1)^+)=\pi_2(K(G,1)^+)=0.
\]
Thus $K(G,1)^+$ is 2-connected and again by Hurewicz's theorem 
\[
\pi_3(K(G,1)^+)\simeq H_3(G,\z).
\]
This implies that $\pi_3(K(Q,1)^+)\simeq H_3(G,\z)$. The other claim follows from the
commutative diagram 
\[
\begin{CD}
\pi_3(K(G,1)^+) @>\simeq>> H_3(G,\z)\\
@VV{\simeq}V             @VVV\\
\pi_3(K(Q,1)^+) @>>> H_3(Q,\z).
\end{CD}
\]
\end{proof}

\begin{prp}\label{exat-pi}
If $Q$ is an $H$-group, then we have the exact sequence
\[
\pi_2(K(Q,1)^+)/2 \arr \pi_3(K(Q,1)^+) \arr H_3(Q,\z)\arr 0.
\]
\end{prp}
\begin{proof}
Let $A\tail G \two Q$ be the universal central extension of $Q$. Since $\pi_2(K(Q,1)^+)\simeq H_2(Q,\z)\simeq A$,
the claim follows immediately from Proposition \ref{pi3-h3} and Corollary \ref{A/22}.
\end{proof}

Let $R$ be an associative ring with unit. Let there be an involution on $R$, that is an automorphism of the
additive group of $R$, $R \arr R$ with $r \mt \overline{r}$, such that $\overline{\overline{r}}=r$ and
$\overline{rs}=\overline{s}\ \overline{r}$. 

Let $\epsilon$ be an element in the center of $R$ such that $\epsilon\overline{\epsilon}=1$. Set 
\[
\text{$R_\epsilon :=\{ r-\epsilon\overline{r} : r \in R\}$ \ \ \ and \ \ \  $R^\epsilon :=\{ r \in R: \epsilon\overline{r}=-r \}$}
\]
and observe that $R_\epsilon \se R^\epsilon$.  Let 
\[
M_n(R)_\epsilon:=\{ A-\epsilon  \overline{A}^T: A\in M_n(R) \},
\]
where $\overline{(a_{ij})}=(\overline{a_{ij}})$.
Let $e_{i, j}(r)$ be the $2n \times 2n$-matrix with $r \in R$ in the $(i, j)$-place and zero elsewhere. 

We define the {\it unitary and orthogonal group} of the pair $(R,\epsilon)$ as follow
\[
\!\!\!\!\!\!\!\!\!\!\!\!\!\!\!\!\!\!\!\!\!\!\!\!\!\!\!
{}_\epsilon \U_{2n}(R) :=\{ A \in \GL_{2n}(R) : \overline{A}^TF_nA=F_n \},
\]
\[
{}_\epsilon \OO_{2n}(R) :=\{ A \in \GL_{2n}(R) : \overline{A}^TQ_nA-Q_n\in M_n(R)_\epsilon \},
\]
respectively, where
\[
F_n= \sum_{i=1}^n(e_{2i-1, 2i}(1)+e_{2i, 2i-1}(\ep)), \ \ \ Q_n:= \sum_{i=1}^ne_{2i-1, 2i}(1).
\]
We have always 
\[
{}_\epsilon \OO_{2n}(R)\se {}_\epsilon \U_{2n}(R).
\]
If $R_\epsilon=R^\epsilon$, then ${}_\epsilon \U_{2n}(R)={}_\epsilon \OO_{2n}(R)$. Observe that 
if there is an element $s$ in the center of $R$ such that $s+\overline{s} \in R^\times$, in particular if $2 \in R^\times$, then $R_\epsilon=R^\epsilon$. 
\begin{exa}
(i) Let $\epsilon=-1$ and let the involution be the identity map ${\rm id}_R$.
Then 
\[
{}_\epsilon \U_{2n}(R) =\Spp_{2n}(R)
\]
is the usual symplectic group. Note that $R$ is commutative in this case.
\par (ii) Let $\epsilon=1$ and let the involution be the identity map ${\rm id}_R$. Then
\[
{}_\epsilon \OO_{2n}(R)  =\OO_{2n}(R)
\]
is the usual orthogonal group. As in the symplectic case, $R$ is necessarily commutative.
\par (iii) Let $\epsilon=-1$ and let the involution is not the identity map ${\rm id}_R$. Then 
\[
{}_\epsilon \U_{2n} (R) = \U_{2n}(R)
\]
is the classical unitary group corresponding to the involution.
\end{exa}

Let $\alpha$ be the permutation of the set of natural numbers given by $\alpha(2i)=2i-1$ and  $\alpha(2i-1)=2i$. For $1 \le i, j \le 2n$, $i \neq j$, 
and every $r \in R$ define
\begin{gather*}
E_{i, j}(r)=
\begin{cases}
I_{2n}+ e_{i, j}(r) & \text{if $i=2k-1, j\!=\!\alpha(i)$, $r\!=\!-\overline{\epsilon}\ \overline{r}$}\\
I_{2n}+ e_{i, j}(r) & \text{if $i=2k, j=\alpha(i)$, $r=-\epsilon\ \overline{r}$}\\
I_{2n}+ e_{i, j}(r)+ e_{\alpha(j), \alpha(i)}(-\overline{r})&
\text{if $i+j=2k$, $i\neq j$}\\
I_{2n}+ e_{i, j}(r)+ e_{\alpha(j), \alpha(i)}(-\epsilon^{-1}\overline{r})&
\text{if $i \neq\alpha(j)$, $i=2k-1$, $j=2l$}\\
I_{2n}+ e_{i, j}(r)+ e_{\alpha(j), \alpha(i)}(\epsilon\overline{r})&
\text{if $i \neq\alpha(j)$, $i=2k$, $j=2l-1$} \end{cases}
\end{gather*}
where $I_{2n}$ is the identity element of $\GL_{2n}(R)$. It is easy to see that $E_{i, j}(r) \in {}_\epsilon \U_{2n}(R)$. 

Let  ${}_\epsilon \EU_{2n}(R)$ be the subgroup of ${}_\epsilon \U_{2n}(R)$ generated by the matrices $E_{i, j}(r)$, $r \in R$, and 
${}_\epsilon \EO_{2n}(R)$ be the group of ${}_\epsilon \OO_{2n}(R)$ generated by the matrices $E_{i, j}(r)$, $r \in R$, which are in 
${}_\epsilon \OO_{2n}(R)$.  We call ${}_\epsilon \EU_{2n}(R)$ the  {\it elementary unitary group} and ${}_\epsilon \EO_{2n}(R)$ the  
{\it elementary orthogonal group}.

The natural embeddings
\[
{}_\epsilon \OO_{2n}(R)  \arr {}_\epsilon \OO_{2(n+1)}(R), \ \ \ {}_\epsilon \U_{2n}(R)  \arr {}_\epsilon \U_{2(n+1)}(R),
\]
given by 
$ A \mt
\left(\begin{array}{cc}
A & 0      \\
0 &  I_2
\end{array} \right)$,
embeds ${}_\epsilon \EO_{2n}(R) $ and ${}_\epsilon \EU_{2n}(R)$ naturally in ${}_\epsilon \EO_{2(n+1)}(R)$ and ${}_\epsilon \EU_{2(n+1)}(R)$,
respectively. Define the {\it stable unitary and ortogonal groups} and their {\it stable elementary subgroups} as follow:
\[
\begin{array}{c}
{}_\epsilon \OO(R)  :=\bigcup_{n\geq 1}{}_\epsilon \OO_{2n}(R), \ \ \ \ \ {}_\epsilon \EO(R)  :=\bigcup_{n\geq 1}{}_\epsilon \EO_{2n}(R),
\end{array}
\]
\[
\begin{array}{c}
{}_\epsilon \U(R)  :=\bigcup_{n\geq 1}{}_\epsilon \U_{2n}(R), \ \ \ \ \ {}_\epsilon \EU(R)  :=\bigcup_{n\geq 1}{}_\epsilon \EU_{2n}(R).
\end{array}
\]
It is a classical result that ${}_\epsilon \EO(R)$ (resp. ${}_\epsilon \EU(R)$)  is perfect and is  normal in  ${}_\epsilon \OO(R)$ (resp. ${}_\epsilon \U(R)$). 
More precisely,
\[
{}_\epsilon \EO(R) =[{}_\epsilon \EO(R), {}_\epsilon \EO(R)]
=[{}_\epsilon \OO(R), {}_\epsilon \OO(R)],
\]
\[
{}_\epsilon \EU(R) =[{}_\epsilon \EU(R), {}_\epsilon \EU(R)]
=[{}_\epsilon \U(R), {}_\epsilon \U(R)],
\]
(see \cite[Theorems 1.4 and 1.4$_o$]{vaserstein1970}). For $n\geq 1$, the  {\it Hermitian and Unitary $K$-groups of the 
pair $(R, \epsilon)$} are defined as follow
\[
{}_\epsilon K_n^h(R):=\pi_n(K({}_\epsilon \OO(R),1)^+), 
\]
\[
{}_\epsilon K_n^u(R):=\pi_n(K({}_\epsilon \U(R),1)^+),
\]
where the plus-constructions are taken with respect to the perfect subgroups ${}_\epsilon \EO(R)$ and ${}_\epsilon \EU(R)$, respectively. Since
${}_\epsilon \OO(R)$ and ${}_\epsilon \U(R)$ are quasi-perfect, for $n\geq 2$ we have
\[
{}_\epsilon K_n^h(R)\simeq\pi_n(K({}_\epsilon \EO(R),1)^+),
\]
\[
 {}_\epsilon K_n^u(R)\simeq\pi_n(K({}_\epsilon \EU(R),1)^+).
\]

One can show that the homomorphism
\[
(A, B)\mt A\oplus B,
\]
where 
\[
(A\oplus B)_{ij}:=
\begin{cases}
A_{kl} & \text{if $i=2k-1$, $j=2l-1$}     \\
B_{kl} & \text{if $i=2k$, $j=2l$}     \\
0 & \text{otherwise}
\end{cases}
\]
defined on ${}_\epsilon \OO(R)$ and ${}_\epsilon \U(R)$
satisfies in conditions (i) and (ii) of Example \ref{directsum}(a) (\cite[page 324, Example (d)]{loday1976}). This
implies that ${}_\epsilon \OO(R)$ and ${}_\epsilon \U(R)$ are $H$-groups.
Now by Proposition \ref{exat-pi} we obtain the following result.

\begin{thm}
Let $R$ be a ring with an involution and a central element $\ep$ such that $\ep\overline{\ep}=1$.
Then we have the exact sequences
\[
{}_\epsilon K_2^h(R)/2\arr {}_\epsilon K_3^h(R)\arr H_3({}_\epsilon \EO(R),\z)\arr 0,
\]
\[
{}_\epsilon K_2^u(R)/2\arr {}_\epsilon K_3^u(R)\arr H_3({}_\epsilon \EU(R),\z)\arr 0.
\]
\end{thm}

With a similar argument we can prove a similar result for Algebraic $K$-groups of a ring: For any ring $R$ with unit we have the exact sequence
\[
K_2(R)/2\arr K_3(R)\arr H_3(\textrm{E}(R),\z)\arr 0,
\]
where $\textrm{E}(R)$ is the elementary subgroup of the stable linear group $\GL(R)$.
This exact sequence already was proved, with a different method, by Suslin in \cite[Corollary 5.2]{suslin1991}. 
We should mention that Suslin also studied the nature of the map $K_2(R)/2\arr K_3(R)$.

\section{The third cohomology of central perfect extensions}\label{coho}

Let $A \tail G \two Q$ be a perfect central extension. From the fibration
\[
K(G,1)^+ \arr K(Q,1)^+ \arr K(A,2),
\]
(see the proof of Theorem \ref{A/2}) we obtain the Serre spectral sequence
\[
E_2^{p,q}=H^p(K(A,2), H^q(G,\z)) \Rightarrow H^{p+q}(Q,\z),
\]
(see \cite[Section~9.5]{davis-kirk2001}).
By the universal coefficients theorem for the cohomology of groups \cite[Exercise 3, Section 1, Chap. III]{brown1994} and 
spaces \cite[Corollary 2.35]{davis-kirk2001} we have 
\[
H^1(G,\z)=0, \ \ H^2(G,\z)\simeq \Hom(H_2(G,\z),\z), \ \ H^1(K(A,2),\z)=0, 
\]
\[
H^2(K(A,2),\z)\simeq \Hom(A,\z), \ \ \ H^3(K(A,2),\z)\simeq \exts(A,\z) 
\]
and
\[  
H^4(K(A,2),\z)\simeq \Hom(\Gamma(A),\z).
\]
For any $n\geq 0$, the spectral sequence gives us a filtration of $H^n(Q,\z)$ as follow
\[
0=F^{n+1}H^n\se F^nH^n\se \cdots\se F^1H^n\se F^0H^n=H^n(Q,\z)
\]
such that 
\[
E_\infty^{p,n-p}\simeq F^pH^n/F^{p+1}H^n. 
\]
By the universal coefficients theorems for groups and spaces and the above calculations, we obtain
\[
E_2^{p,1}=0,  \ \ \ E_2^{0,q}\simeq H^q(G,\z),  \ \ \ \ p,q\geq 0,
\]
and
\[
E_2^{2,0}\simeq \Hom(A,\z), \ \ \ E_2^{3,0}\simeq \exts(A,\z), \ \ \ E_2^{1,2}=0, 
\]
\[
E_2^{2,2}\simeq \Hom(A, H^2(G,\z)), \ \ \ E_2^{4,0}\simeq \Gamma(A)^\ast,
\]
where for an abelian group $M$, $M^\ast$ is the dual group $\Hom_\z(M,\z)$. Now by a
direct analysis of the filtrations of $H^2(Q,\z)$ and  $H^3(Q,\z)$ we obtain the exact sequence
\begin{equation}\label{coh-ex}
0\arr \Hom(A,\z)\! \arr H^2(Q,\z) \arr H^2(G,\z)\! \arr
\exts(A,\z) \arr H^3(Q,\z)
\end{equation}
\[
\arr \ker\Big(H^3(G,\z) \arr \Hom(A, H^2(G,\z))\Big)
\arr \Gamma(A)^\ast.
\]
By Theorem \ref{E-H}, we have the exact sequence 
\[
0 \arr H_2(G,\z)\arr H_2(Q,\z)\arr A\arr 0.
\]
This together with the isomorphisms $H^2(G,\z)\simeq \Hom(H_2(G,\z),\z)$ and $H^2(Q,\z)\simeq \Hom(H_2(Q,\z),\z)$ imply that 
the natural map $H^2(Q,\z) \arr H^2(G,\z)$ is surjective.
Thus from the exact sequence (\ref{coh-ex}) and the isomorphism
\begin{align*}
\Hom(A, H^2(G,\z)) & \simeq \Hom(A, \Hom(H_2(G,\z),\z))\\
& \simeq (A\otimes_\z H_2(G,\z))^\ast,
\end{align*}
we obtain the exact sequence
\[
0\!\arr\! \exts(A,\z) \!\arr\! H^3(Q,\z) \!\arr \!
\ker\Big(H^3(G,\z)\! \overset{\rho^\ast}{\arr} \!(A\otimes_\z H_2(G,\z))^\ast\Big)\!\arr\! \Gamma(A)^\ast.
\]
This proves the first part of the following proposition.

\begin{prp}
Let $A \tail G \two Q$ be a perfect central extension. Then we have the exact sequence
\[
0\!\arr\! \exts(A,\z) \!\arr\! H^3(Q,\z) \!\arr \!
\ker\Big(H^3(G,\z)\! \overset{\rho^\ast}{\arr} \!(A\otimes_\z H_2(G,\z))^\ast\Big)\!\arr\! \Gamma(A)^\ast.
\]
In particular, if $Q$ is an $H$-group, then we have the exact sequence
\[
0\arr \exts(A,\z) \arr H^3(Q,\z) \arr H^3(G,\z) \overset{ \rho^\ast}\arr (A\otimes_\z H_2(G,\z))^\ast.
\]
\end{prp}

\begin{proof}
We already proved the first part. So let $Q$ be an $H$-group.
First assume that the extension is universal. Then the above extension finds the following form
\[
0\arr \exts(A,\z) \arr H^3(Q,\z) \arr H^3(G,\z) \arr \Gamma(A)^\ast.
\]
From the proof of Theorem \ref{A/2} we see that the map $\Gamma(A)\arr H_3(G,\z)$ factors
throught $A/2=\Gamma(A)/\Omega(A)$. Thus 
\[
H^3(G, \z)\simeq H_3(G,\z)^\ast \arr \Gamma(A)^\ast
\]
factors through $(A/2)^\ast=0$, which implies that it is trivial.

In general the extension $A \tail G \two Q$ is an epimorphic image of a universal central extension of $Q$,
say $A_1 \tail G_1 \two Q$. Thus $A_1\simeq H_2(Q,\z)$ and  we have a morphism of extensions
\[
\begin{tikzcd}
A_1 \ar[r, tail]\ar[d, two heads]& G_1 \ar[r, two heads]\ar[d, two heads] & Q\ar[d, "="]\\
A \ar[r, tail] & G \ar[r, two heads] & Q.
\end{tikzcd}
\]
This gives us the commutative diagram of exact sequences
\[
\begin{tikzcd}
0\!\ar[r] & \exts(A,\z) \ar[r] \ar[d] &\! H^3(Q,\z) \ar[r]\ar[d, "="]&\! \widetilde{H}^3(G,\z) \ar[r]\ar[d] &\!\Gamma(A)^\ast\ar[d, hook]\\
0\!\ar[r] & \exts(A_1,\z) \ar[r] &\! H^3(Q,\z) \ar[r] &\! H^3(G_1,\z) \ar[r, "0"] &\! \Gamma(A_1)^\ast.
\end{tikzcd}
\] 
where 
\[
\widetilde{H}^3(G,\z):=\ker\Big(H^3(G,\z) \overset{\rho^\ast}\larr (A\otimes_\z H_2(G,\z))^\ast\Big).
\]
(Note that since $A_1\arr A$ is surjective, $\Gamma(A_1)\arr \Gamma(A)$ is surjective too. This implies that the map
$\Gamma(A)^\ast \arr \Gamma(A_1)^\ast$ is injective.) Now from the above diagram we see that
the map  $\widetilde{H}^3(G,\z) \arr\Gamma(A)^\ast$ is trivial. This proves our claim.
\end{proof}

Note that the second part of the above proposition is the cohomology analogue of Theorem \ref{A/2}.

\begin{cor}
Let $A \tail G \two Q$ be an universal central extension. Then we have the exact sequence
\[
0\arr \exts(A,\z) \arr H^3(Q,\z) \arr H^3(G,\z) \arr \Gamma(A)^\ast.
\]
In particular, if $Q$ is an $H$-group, then we have the exact sequence 
\[
0\arr \exts(A,\z) \arr H^3(Q,\z) \arr H^3(G,\z) \arr 0.
\]
\end{cor}

\begin{rem}
Let $A$ be a central subgroup of $G$ and let $A\se G'$. Let $i:A\arr G$ be the usual inclusion map. We have seen at the beginning 
of Section \ref{stem}, that $i_\ast: H_2(A,\z)\arr H_2(G,\z)$ is trivial and the image of $i_\ast: H_3(A,\z)\arr H_3(G,\z)$ is torsion. Thus 

\[
i^\ast: H_3(G,\z)^\ast\arr H_3(A,\z)^\ast
\]
is trivial (because it factors through $\Hom( \im(i_\ast),\z)=0$). Now  it follows from the 
commutative diagram
\[
\begin{tikzcd}
0\! \ar[r] &\! \exts(H_2(G,\z),\z)\ar[r]\ar[d, "0"] & H^3(G,\z)\!\ar[r]\ar[d] & H_3(G,\z)^\ast \!\ar[r]\ar[d, "0"]&\! 0\\
0\! \ar[r] &\! \exts(H_2(A,\z),\z)\ar[r]          & H^3(A,\z)\!\ar[r]           & H_3(A,\z)^\ast\! \ar[r]        &\! 0,
\end{tikzcd}
\]
that 
\[
\begin{array}{c}
\im(H^3(G,\z) \arr H^3(A,\z))\se \exts(\bigwedge_\z^2 A,\z). 
\end{array}
\]
\end{rem}


\end{document}